\newcommand{\ra}{\rightarrow}
\newcommand{\R}{\mathbb{R}}
\newcommand{\N}{\mathbb{N}}
\newcommand{\8}{\infty}
\newcommand{\la}{\lambda}
\newcommand{\ep}{\epsilon}
\newtheorem{thm}{Theorem}[section]
\newtheorem{lem}[thm]{Lemma}
\newtheorem{cor}[thm]{Corollary}
\begin{document}

\title[Volume growth and entropy for $C^1$ partially hyperbolic...]{Volume growth and entropy for $C^1$ partially hyperbolic diffeomorphisms}
\author{Radu Saghin}

\begin{abstract}
We show that the metric entropy of a $C^1$ diffeomorphism with a dominated splitting and the dominating bundle uniformly expanding is bounded from above by the integrated volume growth of the dominating (expanding) bundle plus the maximal Lyapunov exponent from the dominated bundle multiplied by its dimension. We also discuss different types of volume growth that can be associated with an expanding foliation and relationships between them, specially when there exists a closed form non-degenerate on the foliation. Some consequences for partially hyperbolic diffeomorphisms are presented.
\end{abstract}
\date{\today}
\maketitle

\section{Introduction and results}

In this paper we will prove an inequality for the measure-theoretical entropy of a $C^1$ partially hyperbolic diffeomorphism, and we will discuss about different types of volume growth that can be associated to the unstable foliation and relationships between them, in special under some additional topological information. The inequality can be viewed as a mixture between the well-known Pesin-Ruelle inequality between the metric entropy and the sum of the positive Lyapunov exponents, and the inequality between entropy and integrated volume growth obtained by Przytycki for $C^{1+r}$ diffeomorphisms, extended by Newhouse to $C^{1+r}$ maps, and eventually shown to be an equality for $C^{\8}$ maps by Kozlovski.

Let $M$ be a compact Riemannian manifold and $f$ a $C^1$ diffeomorphism on $M$. If $\mu$ is an ergodic invariant measure for $f$, then, by Oseledets Theorem, there exist $\lambda_1<\lambda_2<\dots <\lambda_s$ (called Lyapunov exponents), positive integers $d_1,d_2,\dots ,d_s$ (their multiplicities) with $d_1+d_2+\dots+d_s=\dim(M)$, and a measurable invariant splitting $TM=E_{\lambda_1}\oplus E_{\lambda_2}\oplus\dots\oplus E_{\lambda_s}$ (the Lyapunov splitting), with $\dim(E_{\lambda_i})=d_i$, such that for $\mu$-almost every $x\in M$ we have
$$
\lim_{n\ra\pm\8}\frac 1n\log\| Df^n(v)\|=\lambda_i,\ \forall v\in E_{\lambda_i}\setminus\{ 0\}.
$$

An invariant splitting of the tangent bundle $TM=E^1\oplus E^2$ is called {\it dominated} if
$$
\| Df|_{E^2(x)}\| >\| Df^{-1}|_{E^1(f(x))}\|^{-1},\ \forall x\in M.
$$
In other papers this condition is required to hold for some power of $f$, however one can always reduce to this simpler condition by changing the metric or replacing $f$ by some power of it. It is well-known that if there exists a dominated splitting, then the Lyapunov splitting will be subordinated to it, meaning that there exists some $r<s$ such that $E^1=E_{\lambda_1}\oplus\dots\oplus E_{\lambda_r}$ and $E^2=E_{\lambda_{r+1}}\oplus\dots\oplus E_{\lambda_s}$ (whenever the Lyapunov bundles are defined). We will denote the maximal Lyapunov exponent on $E^1$ by $\lambda_{E^1}^+$ (in this case it will be $\lambda_r$).

We define the {\it integrated volume growth of $E^2$} to be
$$
\overline{v}_{E^2}(f)=\limsup_{n\ra\8}\frac 1n\log\int_M\|\Lambda^{\dim(E^2)}Df^n|_{E^2}\| dLeb.
$$

Given an invariant measure $\mu$ for a map $f$, one can define the metric (or measure-theoretic) entropy of $f$ with respect to $\mu$, denoted $h_{\mu}(f)$, in several equivalent ways. We will use the Katok definition for ergodic $\mu$. Let $d_n$ be the metric on $M$ defined by $d_n(x,y)=\max_{0\leq i\leq n}d(f^i(x),f^i(y)),\ \forall x,y\in M$. We denote by $N(n,\delta,f,\mu)$ the minimal number of balls of $d_n$-radius $\delta$ covering a subset of $M$ of $\mu$-measure greater than one half. Then
$$
h_{\mu}(f)=\lim_{\delta\ra 0}\limsup_{n\ra\8}\frac 1n\log N(n,\delta,f,\mu).
$$

\begin{thm}\label{t}
Let $f$ be a $C^1$ diffeomorphism of the compact Riemannian manifold $M$, which has a dominated splitting $TM=E^{cs}\oplus E^u$, with the bundle $E^u$ uniformly expanding, and let $\mu$ be an ergodic invariant measure for $f$. Then
$$
h_{\mu}(f)\leq \overline{v}_{E^u}(f)+\dim(E^{cs})\lambda_{E^{cs}}^+.
$$
\end{thm}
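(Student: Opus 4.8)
The plan is to estimate Katok's covering number $N(n,\delta,f,\mu)$ directly, by covering a set of $\mu$-measure larger than one half with Bowen balls and splitting the count into a contribution coming from the expanding bundle $E^u$ and one coming from the dominated bundle $E^{cs}$. Since $E^u$ is uniformly expanding, each point carries a local unstable disk tangent to $E^u$, and I would organize these into finitely many foliation charts $U_1,\dots,U_k$ in which the unstable plaques form a continuous family over a $C^1$ transversal disk tangent to $E^{cs}$. On the measure side, the function $n\mapsto\log\|Df^n|_{E^{cs}}\|$ is subadditive over the cocycle, so Kingman's theorem gives $\frac1n\log\|Df^n|_{E^{cs}(x)}\|\to\lambda_{E^{cs}}^+$ for $\mu$-almost every $x$; by Egorov's theorem there is a set $G$ with $\mu(G)>\frac12$ on which this convergence is uniform, so that $\|Df^n|_{E^{cs}(x)}\|\le e^{n(\lambda_{E^{cs}}^++\epsilon)}$ for every $x\in G$ and every large $n$. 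It then suffices to bound the number of $d_n$-balls of radius $\delta$ needed to cover $G$.

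For the expanding direction I would work inside one chart and one unstable plaque $D$. Because $E^u$ is uniformly expanded, two points of $D$ that are $d_n$-close are exactly the points that become $\delta$-close after applying $f^n$; indeed $B_n(x,\delta)\cap D=f^{-n}\big(B^u_\delta(f^n x)\big)$. Hence the number of $d_n$-balls needed to cover $D$ is comparable to $\mathrm{Vol}_u(f^n(D))/\delta^{\dim(E^u)}$, and $\mathrm{Vol}_u(f^n(D))=\int_D\|\Lambda^{\dim(E^u)}Df^n|_{E^u}\|\,d\mathrm{Vol}_u$. Integrating over the transversal of each chart with respect to Lebesgue measure and summing over the finite cover, Fubini turns this into $\int_M\|\Lambda^{\dim(E^u)}Df^n|_{E^u}\|\,dLeb$, whose exponential growth rate is precisely $\overline{v}_{E^u}(f)$. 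This supplies the first term of the inequality.

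For the dominated direction I would count how many $d_n$-balls are needed to separate distinct transversals. A cs-plaque of radius $\delta$ based at $x\in G$ is $d_n$-shadowed within a cs-diameter governed by $\|Df^n|_{E^{cs}(x)}\|$, so along the transversal at most $C\,\big(\|Df^n|_{E^{cs}(x)}\|\big)^{\dim(E^{cs})}\le C\,e^{n\dim(E^{cs})(\lambda_{E^{cs}}^++\epsilon)}$ boxes are required, where domination keeps the plaques from interleaving. Multiplying the transversal count by the in-plaque count of the previous step and summing over the finite chart family yields $N(n,\delta,f,\mu)\le C\,\delta^{-\dim M}\,e^{n\dim(E^{cs})(\lambda_{E^{cs}}^++\epsilon)}\int_M\|\Lambda^{\dim(E^u)}Df^n|_{E^u}\|\,dLeb$. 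Taking $\frac1n\log$, letting $n\to\infty$ and then $\delta\to0$ and $\epsilon\to0$, produces the claimed bound.

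The main obstacle is reconciling the two measures: Katok's count is taken with respect to $\mu$, whereas $\overline{v}_{E^u}(f)$ is defined through Lebesgue measure, and in the $C^1$ category the unstable foliation need not be absolutely continuous, so one cannot disintegrate $\mu$ along unstable plaques. My way around this is never to transport $\mu$ into the leaves: the expanding direction is covered leaf-by-leaf using only intrinsic leaf-volume against a Lebesgue transversal, entirely independent of $\mu$, while $\mu$ enters solely through the good set $G$ and the uniform control of $\|Df^n|_{E^{cs}}\|$ there. The delicate points to make rigorous are, first, the purely metric comparison between the $d_n$-covering number of a plaque and $\mathrm{Vol}_u(f^n(D))/\delta^{\dim(E^u)}$ with constants uniform in $x$ and in the plaque (feasible in $C^1$ precisely because uniform expansion and domination replace the distortion control that $C^{1+r}$ would otherwise provide); and second, the sharp accounting of the cs-factor, where one must control the overlapping and merging of transversals under $f^n$ so that it contributes exactly $\dim(E^{cs})\lambda_{E^{cs}}^+$ rather than being spuriously inflated. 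I expect this second estimate, rather than the unstable count, to be the genuinely hard part of the argument.
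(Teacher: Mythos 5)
Your overall strategy is the natural dual of the paper's: you try to bound Katok's covering number $N(n,\delta,f,\mu)$ from above by an explicit cover built from (cs-plaque)$\times$(unstable piece) boxes, whereas the paper fixes a maximal $(n,r)$-separated set $S$ in a good set of measure $>\frac12$, shows each Bowen ball $B_{d_n}(x,r)$, $x\in S$, \emph{contains} such a box, and bounds $\int_{B_{d_n}(x,r)}\|\Lambda^u Df^n|_{E^u}\|\,dLeb$ from \emph{below} by $c\,r^u\,r(n,r)^d\sim e^{-nd(\lambda+2\epsilon)}$; summing over the disjoint balls then gives the theorem. The two ingredients you identify (volume of unstable images, Lyapunov control on $E^{cs}$) are the right ones, and your Kingman--Egorov substitute for the paper's Lemma 2.1 is legitimate. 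But there is a genuine gap in your cs-step: the bound $\|Df^n|_{E^{cs}(x)}\|\le e^{n(\lambda+\epsilon)}$ for $x\in G$ is a statement about the derivative \emph{at the single point} $x$, and it does not control the $d_n$-diameter of a cs-plaque of fixed radius $\delta$ through $x$ --- points $y$ in that plaque feel derivatives $\|Df^n|_{T_y\tilde W^{cs}}\|$ that the Lyapunov exponent at $x$ says nothing about. One must shrink the plaque to radius $\sim e^{-n(\lambda+2\epsilon)}\delta$, break $n$ into blocks of a fixed length $N_{\epsilon}$ so that $Df^{N_\epsilon}$ restricted to the cs-cone can be compared at $r_\epsilon$-close points by uniform continuity, and then run a bootstrapping induction showing the iterated plaque never leaves the $r_\epsilon$-neighborhood where the comparison is valid. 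This is exactly the paper's Lemma 2.2 and it is the technical heart of the proof; your proposal compresses it into the unproved phrase ``is $d_n$-shadowed within a cs-diameter governed by $\|Df^n|_{E^{cs}(x)}\|$.''

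The second, structural, problem is that the upper-bound (covering) direction forces estimates that the $C^1$ hypothesis does not supply for free, and which the paper's lower-bound route deliberately avoids. To pass from ``number of $d_n$-balls covering a plaque $D$'' to $\mathrm{Vol}(f^n(D))/\delta^u$ you need a lower volume density for $f^n(D)$ near every point of a separated net, including near $\partial f^n(D)$, whose $(u-1)$-volume can dominate the count; and to replace the sum over a discrete transversal net by $\int_M\|\Lambda^u Df^n|_{E^u}\|\,dLeb$ via Fubini you must compare $\mathrm{Vol}(f^n(D_y))$ for transversally nearby plaques, which in the $C^1$ category holds only up to factors $e^{n\omega(\delta)}$ coming from the modulus of continuity of $Df$ (this is precisely the kind of distortion control whose absence is the point of the $C^1$ setting). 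Moreover your transversal grid is deterministic while your control is only available on the measure-$\frac12$ set $G$; once you repair this by centering boxes at points of $G$ and extracting a maximal $(n,r)$-separated subset, the only way to count that subset is disjointness of the $d_n$-balls plus a lower bound on the integral over each --- i.e., you are led back to the paper's argument, which needs nothing beyond the trivial inequality $\int_{\text{piece}}\|\Lambda^u Df^n\|\ge \mathrm{vol}(f^n(\text{piece}))\ge C r^u$ and the containment of the box in the Bowen ball. I would recommend reversing the direction of your estimate accordingly.
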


We make some remarks about this result. First, we remind the Ruelle inequality for a $C^1$ map $f$ and an ergodic invariant measure $\mu$:
$$
h_{\mu}(f)\leq\sum_{\lambda_i>0}d_i\lambda_i.
$$
There is also the inequality obtained by Przytycki and Newhouse for $C^{1+\alpha}$ maps:
$$
h(f)\leq\limsup_{n\ra\8}\frac 1n\log\int_M\|\Lambda^*Df^n\| dLeb.
$$
Our result uses a Przytycki-type estimate in the direction of the unstable bundle $E^2$ and a (weaker) Ruelle-type estimate in the direction of the central-stable bundle $E^1$. In the case of $C^{1+\alpha}$ maps a stronger inequality was already obtained by Kozlovski (in fact he obtained a different inequality which can be put into this form for partially hyperbolic diffeomorphisms):
$$
h_{\mu}(f)\leq\overline{v}_{E^u}(f)+\sum_{0<\lambda_i\leq\lambda_{E^{cs}}^+}d_i\lambda_i.
$$
The results by Przytycki, Newhouse and Kozlovski make use of the $C^{1+\alpha}$ condition in Pesin theory, which we do not have in the $C^1$ case (we do not have the absolute continuity of $W^u$ either). However, as it was already remarked in other papers like \cite{ABC}, one has sufficient control of the expansion in the center-stable direction for almost every point, which together with the uniform expansion in the unstable direction helps us obtain the result.

In the next section we will give the proof of Theorem \ref{t}. In the last section we will discuss different types of volume growth than can be associated with a foliation and relationships between them. In particular, when there exists a closed form non-degenerate on the unstable foliation of a partially hyperbolic diffeomorphism, then all the different types of volume growth associated to the unstable foliation are equal, and locally constant (they are equal in fact with the spectral radius induced by the map in the corresponding cohomology group). We discuss some applications of these results to partially hyperbolic diffeomorphisms with the dimension of the center bundle equal to one or two. This paper is in the same spirit as \cite{HSX}, however here we consider the $C^1$ case, and we focus on a different type of volume growth and a different topological condition.

\section{Proof of Theorem \ref{t}}

We will denote for simplicity $\overline v_u=\overline v_{E^u}$, $\lambda =\lambda_{E^{cs}}^+$, $u=\dim(E^u)$ and $d=\dim(E^{cs})$. The main idea of the proof is similar to the one used by Przytycki, Kozlovski and others: we will bound $e^{nd(\lambda+\epsilon)}\int_{B_{d_n}(x,\delta)}\|\Lambda^uDf^n|_{E^u}\| dLeb$ from bellow independently of $n$, for a large set of points $x$.

The proof of the following lemma can be found in \cite{ABC}.

\begin{lem}\label{l}
Assume that $TM=E^{cs}\oplus E^u$ is a dominated splitting for the $C^1$ diffeomorphism $f$, $\mu$ is an ergodic invariant measure, and $\la$ is the maximal Lyapunov exponent corresponding to $E^{cs}$. Given $\epsilon>0$,  there exists $N_{\ep}>0$ such that for $\mu$-a.e. $x\in M$ we have
$$
\lim_{n\ra\8}\frac 1{nN_{\ep}}\sum_{i=0}^{n-1}\log\| Df^{N_{\ep}}(f^{iN_{\ep}}(x))|_{E^{cs}}\|<\la +\ep.
$$
\end{lem}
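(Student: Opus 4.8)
The statement asserts that the top Lyapunov exponent of $Df$ along $E^{cs}$ is approximated from above by finite-time ($N_\ep$-step) averages, and the plan is the standard Kingman-then-Birkhoff route. First I would record that the sequence $a_n(x)=\log\| Df^n(x)|_{E^{cs}}\|$ is subadditive: since $E^{cs}$ is $Df$-invariant, $Df^{m+n}(x)|_{E^{cs}}=\big(Df^m(f^n x)|_{E^{cs}}\big)\circ\big(Df^n(x)|_{E^{cs}}\big)$, so submultiplicativity of the operator norm gives $a_{m+n}(x)\le a_m(f^n x)+a_n(x)$. As $M$ is compact and $f$ is $C^1$, the function $a_1$ is bounded, so Kingman's subadditive ergodic theorem applies. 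Ergodicity of $\mu$ then yields, for $\mu$-a.e. $x$, that $\tfrac1n a_n(x)\ra\la$ together with the infimum characterization $\la=\inf_n\tfrac1n\int_M a_n\,d\mu$. I would use this to fix $N_\ep$ with $\tfrac1{N_\ep}\int_M a_{N_\ep}\,d\mu<\la+\tfrac\ep2$, leaving a little room.

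Next I would identify the left-hand side of the claimed inequality as a Birkhoff average. Writing $F=f^{N_\ep}$ and $\Phi=a_{N_\ep}$, the quantity is $\tfrac1{N_\ep}\cdot\tfrac1n\sum_{i=0}^{n-1}\Phi(F^i(x))$; since $\Phi\in L^1(\mu)$ and $\mu$ is $F$-invariant, Birkhoff's ergodic theorem guarantees that the limit $L(x)=\tfrac1{N_\ep}\lim_{n\ra\8}\tfrac1n\sum_{i=0}^{n-1}\Phi(F^i(x))$ exists for $\mu$-a.e. $x$. Two facts pin it down. Integrating and using invariance of the average gives $\int_M L\,d\mu=\tfrac1{N_\ep}\int_M\Phi\,d\mu<\la+\tfrac\ep2$; and iterating subadditivity gives the pointwise lower bound $\sum_{i=0}^{n-1}\Phi(F^i(x))\ge a_{nN_\ep}(x)$, whence $L(x)\ge\lim_{n\ra\8}\tfrac1{nN_\ep}a_{nN_\ep}(x)=\la$ for $\mu$-a.e. $x$. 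Thus $L\ge\la$ a.e.\ and $\int_M L\,d\mu<\la+\tfrac\ep2$.

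The \emph{main obstacle} is upgrading this on-average bound to the pointwise a.e.\ bound $L<\la+\ep$, and it is exactly here that the possible failure of $\mu$ to be ergodic for the power $F=f^{N_\ep}$ intervenes: the Birkhoff limit $L$ is (a rescaling of) the conditional expectation of $\Phi$ on the $\sigma$-algebra of $F$-invariant sets, so a priori it could exceed $\la+\ep$ on part of $M$ while still averaging below it. I would resolve this by decomposing $\mu$ into its finitely many $F$-ergodic components, which are cyclically permuted by $f$ and therefore all share the same $E^{cs}$-exponent $N_\ep\la$; on each component $L$ is constant, equal to $\tfrac1{N_\ep}$ times the one-step average of the $F$-cocycle, and the delicate step is to control this quantity uniformly over the (finitely many) components, forcing it down toward $\la$ for $N_\ep$ large via a Gelfand-type estimate. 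Alternatively, since the application (through Katok's definition) only requires a set of measure greater than one half, the bounds $L\ge\la$ and $\int_M L\,d\mu<\la+\tfrac\ep2$ already give, by Markov's inequality applied to $L-\la\ge 0$, that $\{L<\la+\ep\}$ has measure close to one, which is enough downstream.
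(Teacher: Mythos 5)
Your route coincides with the paper's own (the paper only sketches this lemma, deferring details to \cite{ABC}): Kingman to get $\la=\inf_n\frac1n\int a_n\,d\mu$ and the a.e.\ convergence $\frac1n a_n\to\la$, a choice of $N_\ep$ making the one-step average small, Birkhoff for $F=f^{N_\ep}$, and then the issue that $\mu$ need not be $F$-ergodic. Everything up to that point is correct, and you have isolated exactly the delicate step. But neither of your two ways of finishing proves the lemma as stated. Option (b) (Markov's inequality) yields the bound only on a set of large measure, not $\mu$-a.e.; this happens to be all that Theorem \ref{t} actually uses (only the set $A_\ep$ with $\mu(A_\ep)>\frac12$ is needed), but it is strictly weaker than the assertion. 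Option (a) is not yet an argument: on the $F$-ergodic components $\mu_0,\dots,\mu_{p-1}$ (with $p\mid N_\ep$, cyclically permuted by $f_*$) the limit $L$ is constant equal to $c_i=\frac1{N_\ep}\int\Phi\,d\mu_i$, and the only estimates you have produced are $c_i\ge\la$ and $\frac1p\sum_i c_i=\frac1{N_\ep}\int\Phi\,d\mu<\la+\frac\ep2$. These give only $\max_i c_i\le\la+p\,\bigl(\frac1{N_\ep}\int\Phi\,d\mu-\la\bigr)$, and since $p$ can be as large as $N_\ep$ (e.g.\ when $\mu$ has an odometer factor) this does not tend to $\la$ merely by taking $N_\ep$ large; ``forcing it down via a Gelfand-type estimate'' is precisely the point that needs proof.

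The missing ingredient is a uniform comparison of consecutive components. Writing $Df^{N}(f(y))|_{E^{cs}}=Df(f^{N}(y))\circ Df^{N}(y)\circ Df(y)^{-1}$ restricted to $E^{cs}$, one gets $|\Phi(f(y))-\Phi(y)|\le 2C$ pointwise with $C=\sup_z\max\{\log\|Df(z)\|,\log\|Df(z)^{-1}\|\}$; since $\mu_{i+1}=f_*\mu_i$, this gives $|c_{i+1}-c_i|\le 2C/N_\ep$, cyclically in $i$. Hence if some $c_{i_0}\ge\la+\ep$, then all indices within cyclic distance $\ep N_\ep/(4C)$ of $i_0$ satisfy $c_i\ge\la+\ep/2$; there are at least $\min\{p,\ep N_\ep/(4C)\}$ of them, and since every $c_i\ge\la$ and $p\le N_\ep$ this forces $\frac1{N_\ep}\int\Phi\,d\mu-\la=\frac1p\sum_i(c_i-\la)\ge\frac\ep2\min\{1,\ep/(4C)\}$. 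So it suffices to choose $N_\ep$ (as Kingman permits) with $\frac1{N_\ep}\int a_{N_\ep}\,d\mu<\la+\frac\ep2\min\{1,\ep/(4C)\}$; then every $c_i<\la+\ep$ and the a.e.\ statement follows. Some device of this kind is what the paper's parenthetical ``one has to be a bit careful because $\mu$ may not be ergodic for $f^{N_\ep}$'' and the reference to \cite{ABC} are standing in for, and it is the one step your proposal leaves open.
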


The idea of the proof of this lemma is that $\lim_{n\ra\8}\frac 1n\log\| Df^n|_{E^{cs}(x)}\|=\la$ for $\mu$-a.e. $x$, then $\lim_{n\ra\8}\frac 1n\int_M\log\| Df^{n}|_{E^{cs}(x)}\| d\mu=\la$, so choose $N_{\ep}$ such that $\int_M\frac 1{N_{\ep}}\log\| Df^{N_{\ep}}|_{E^{cs}(x)}\| d\mu<\la+\ep$, and apply the Birkhoff Ergodic Theorem to the diffeomorphism $f^{N_{\ep}}$ and the real valued function $x\ra\frac 1{N_{\ep}}\log\| Df^{N_{\ep}}|_{E^{cs}(x)}\|$ (one has to be a bit careful because $\mu$ may not be ergodic for $f^{N_{\ep}}$).

The bundle $E^u$ can be integrated to form the unstable foliation $W^u$, but the center-stable
bundle $E^{cs}$ may not be integrable. However inside a ball of radius 1 (eventually after
rescaling the metric) around every point $x\in M$ one can construct fake foliations (or plaques)
$\tilde W^{cs}_x$, with $C^1$ leaves and the tangent space $T\tilde W^{cs}_x$ depending
continuously on the point, uniformly with respect to $x$, such that $T_x\tilde
W^{cs}_x(x)=E^{cs}_x$, and $f(\tilde W^{cs}_{x,r_0}(x))\subset\tilde W^{cs}_{f(x)}(f(x))$ for some
$r_0$ independent of $x$ (here $\tilde W^{cs}_{x,r_0}(x)$ is the ball of radius $r_0$ in $\tilde
W^{cs}_x(x)$ with the induced metric).

There exist $k_{\ep}>0$, $A_{\ep}\subset M$, $\mu(A_{\ep})>\frac 12$, such that for every $x\in
A_{\ep}$, and for every $k\geq k_{\ep}$, we have
$$
\frac 1{kN_{\ep}}\sum_{i=0}^{k-1}\log\| Df^{N_{\ep}}(f^{iN_{\ep}}(x))|_{E^{cs}}\|<\la +\ep.
$$

There exist $0<r_{\ep}<r_0$, and a backward invariant cone field $C^{cs}_{\ep}$ around $E^{cs}$,
such that for any $x,y\in M$ with $d(x,y)<r_{\ep}$, and for any $d$-dimensional subspaces $E_x$
and $E_y$ of $T_xM$ and $T_yM$ respectively, which are tangent to $C^{cs}_{\ep}$, we have
$$
|\log\| Df^{N_{\ep}}|_{E_x}\|-\log\| Df^{N_{\ep}}|_{E_y}\| |<N_{\ep}\ep.
$$
By making eventually $r_{\ep}$ smaller, we can assume that $T\tilde W^{cs}_{x,r_{\ep}}(x)$ is
tangent to $C^{cs}_{\ep}$ for every $x\in M$.

For every $k>0$ and every $y\in\tilde W^{cs}_{x,r_{\ep}}(x)$ such that $f^i(y)\in\tilde
W^{cs}_{f^i(x),r_{\ep}}(f^i(x))$, for all $0\leq i<kN_{\ep}$, we have
$$
\| Df^{kN_{\ep}}|_{T\tilde W^{cs}_x(y)}\|\leq\prod_{i=0}^{k-1}\| Df^{N_{\ep}}|_{T\tilde
W^{cs}_{f^{iN_{\ep}}(x)}(f^{iN_{\ep}}(y))}\|<e^{kN_{\ep}\ep}\prod_{i=0}^{k-1}\|
Df^{N_{\ep}}|_{E^{cs}_{f^{iN_{\ep}}(x)}}\|.
$$

Let $L>\max_{x\in M}\| Df|_{C^{cs}_x}\|$, $L>1$, and $n_{\ep}=(k_{\ep}-1)N_{\ep}\frac{\log
L}{\la+2\ep}$.

\begin{lem}\label{l2}
For every $x\in A_{\ep}$, $r\leq r_{\ep}$, $n\geq n_{\ep}$, and for all $0\leq i\leq n$, we have
$f^i(\tilde W^{cs}_{x,r(n,r)}(x))\subset\tilde W^{cs}_{f^i(x),r/2}(f^i(x))$, where $r(n,r)=\frac 12L^{-N_{\ep}}e^{-n(\la+2\ep)}r$.
\end{lem}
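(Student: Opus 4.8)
The plan is to reduce the whole statement to a single uniform bound on the tangential derivative along the plaques, namely $\|Df^i|_{T\tilde W^{cs}_x(z)}\|\le L^{N_\ep}e^{n(\la+2\ep)}$ for every $0\le i\le n$ and every point $z$ whose first $i$ iterates have so far remained in the $r_\ep$-plaques, and then to run a short induction on $i$ that converts this derivative bound into the claimed inclusion. Throughout I assume $\la+2\ep>0$, which is implicit in $n_\ep>0$ and makes $r(n,r)\le r/2$; the contracting case is easier.

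First I would assemble the refined bound for the ``long'' times out of the three estimates already established above the lemma. For $i=kN_\ep$ with $k\ge k_\ep$, the product estimate together with the defining property of $A_\ep$ gives $\|Df^{kN_\ep}|_{T\tilde W^{cs}_x(z)}\|<e^{kN_\ep\ep}\prod_{j=0}^{k-1}\|Df^{N_\ep}|_{E^{cs}_{f^{jN_\ep}(x)}}\|<e^{kN_\ep(\la+2\ep)}$, provided the first $kN_\ep$ iterates of $z$ lie in the corresponding $r_\ep$-plaques. For a general time I would write $i=kN_\ep+j$ with $0\le j<N_\ep$ and split $Df^i$ as $Df^j$ at $f^{kN_\ep}(z)$ composed with $Df^{kN_\ep}$ at $z$; bounding the first factor crudely by $L^{j}\le L^{N_\ep}$, using $L>\|Df|_{C^{cs}}\|$ and the fact that the plaque tangents stay in the cone, and the second by the estimate above, yields $\|Df^i|_{T\tilde W^{cs}_x(z)}\|\le L^{N_\ep}e^{i(\la+2\ep)}\le L^{N_\ep}e^{n(\la+2\ep)}$ whenever $i\ge k_\ep N_\ep$.

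The remaining range $i<k_\ep N_\ep$ is exactly where the choice of $n_\ep$ enters. There I have only the crude estimate $\|Df^i|_{T\tilde W^{cs}_x(z)}\|\le L^i\le L^{k_\ep N_\ep}=L^{N_\ep}L^{(k_\ep-1)N_\ep}$, and since $n\ge n_\ep=(k_\ep-1)N_\ep\frac{\log L}{\la+2\ep}$ one has $L^{(k_\ep-1)N_\ep}\le e^{n(\la+2\ep)}$, so the same uniform bound $L^{N_\ep}e^{n(\la+2\ep)}$ persists. Hence for every $0\le i\le n$ the tangential derivative is at most $L^{N_\ep}e^{n(\la+2\ep)}$, as long as the orbit has not yet left the $r_\ep$-plaques.

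Finally I would close the loop by induction on $i$. Since $r\le r_\ep$ we get $r(n,r)\le r/2\le r_\ep/2$, which settles $i=0$. Assuming $f^m(\tilde W^{cs}_{x,r(n,r)}(x))\subset\tilde W^{cs}_{f^m(x),r/2}(f^m(x))$ for all $m<i$, every point of the starting plaque, and of the leaf-geodesic joining $x$ to it, has its first $i$ iterates inside $r/2<r_\ep$ plaques, so the uniform derivative bound applies pointwise along that geodesic; integrating it gives leaf-distance $d(f^i(x),f^i(z))\le L^{N_\ep}e^{n(\la+2\ep)}\cdot r(n,r)=r/2$, which is precisely $f^i(z)\in\tilde W^{cs}_{f^i(x),r/2}(f^i(x))$ and completes the step. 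The main obstacle is the self-referential structure: the expansion estimates presuppose that the orbit stays in the plaques, yet that is the very conclusion sought, and the role of the induction is to break this circularity one step at a time. A secondary point needing care is that the bound must hold uniformly over all points of the plaque rather than only its endpoints, which is why the estimate is kept pointwise for every $z$ and integrated along the connecting geodesic.
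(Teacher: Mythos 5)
Your proof is correct and follows essentially the same route as the paper: the paper phrases the two regimes as conditions (*) (crude bound $L^{k_\ep N_\ep}$ for the first $k_\ep N_\ep$ iterates) and (**) (the refined bound $e^{n(\la+2\ep)}L^{N_\ep}$ for $n=kN_\ep+l$), observes that $n\ge n_\ep$ makes (**) imply (*), and closes with the same induction; your single uniform derivative bound merely merges the two cases. The only point worth noting is your explicit assumption $\la+2\ep>0$, which the paper leaves implicit in the definition of $n_\ep$.
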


\begin{proof}
We first remark that if $s<r<r_0$ then $f(\tilde W^{cs}_{x,s}(x))\subset\tilde
W^{cs}_{f(x),Ls}(f(x))$, because $L$ is an upper bound for the expansion in $\tilde W^{cs}_{x,r}$.
Using induction one can show that if $r(n,r)L^{k_{\ep}N_{\ep}}\leq \frac r2$ (*) then the first
$k_{\ep}N_{\ep}$ iterates of $\tilde W^{cs}_{x,r(n,r)}(x)$ stay inside disks of radius $r/2$ of
the fake center-stable foliation around the iterates of $x$.

If $x\in A_{\ep}$, $k\geq k_{\ep}$ and $y\in\tilde W^{cs}_{x,s}(x)$ such that $f^i(y)\in\tilde
W^{cs}_{f^i(x),r_{\ep}}(f^i(x))$, for all $0\leq i<n=kN_{\ep}+l$, $0\leq l<N_{\ep}$, we have
$$
\| Df^n|_{T\tilde W^{cs}_x(y)}\|<e^{kN_{\ep}\ep}L^l\prod_{i=0}^{k-1}\|
Df^{N_{\ep}}|_{E^{cs}_{f^{iN_{\ep}}(x)}}\|<e^{kN_{\ep}(\la+2\ep)}L^l,
$$
so
$$
f^n(\tilde W^{cs}_{x,s}(x))\subset\tilde W^{cs}_{f^n(x), se^{kN_{\ep}(\la+2\ep)}L^l}(f^n(x))\subset\tilde W^{cs}_{f^n(x),se^{n(\la+2\ep)}L^{N_\ep}}(f^n(x)).
$$

Therefore, if $r(n,r)L^{k_{\ep}N_{\ep}}\leq \frac r2$ (*) and $r(n,r)e^{n(\la+2\ep)}L^{N_{\ep}}\leq \frac r2$ (**) are satisfied, then again we can obtain by induction  that the first $n$ iterates of $\tilde W^{cs}_{x,r(n,r)}(x)$ stay inside disks of radius $r/2$ of the fake center-stable foliation around the iterates of $x$. The condition $n\geq n_{\ep}$ for our choice of $n_{\ep}$ is equivalent to the fact that inequality (**) implies inequality (*), while the inequality (**) is equivalent to $r(n,r)\leq\frac 12L^{-N_{\ep}}e^{-n(\la+2\ep)}r$, q.e.d.
\end{proof}

Now we continue with the proof of Theorem \ref{t}. Lemma \ref{l2} shows that for every $x\in
A_{\ep}$, $r<r_{\ep}$ and $n\geq n_{\ep}$, we have $\tilde W^{cs}_{x,r(n,r)}(x)\subset
B_{d_n}(x,\frac r2)\subset B_{d_n}(x,r)$, where $r(n,r)=\frac 12L^{-N_{\ep}}e^{-n(\la+2\ep)}r$.

Let $C^u$ be a forward invariant cone field around $E^u$ such that $Df$ is uniformly expanding on
$C^u$. For every $x\in M$ consider $D^u_x$ to be a smooth (uniformly with respect to $x$)
foliations of $B(x,1)$ tangent to $C^u$, which are uniformly absolutely continuous: there exist continuous real-valued functions $\alpha_x:\tilde W^{cs}_x(x)\cap B(x,1)\ra\R$ and $\beta_x:B(x,1)\ra\R$ such that for every integrable function $h:M\ra\R$ we have
$$
\int_{B(x,1)}h(z)dLeb=\int_{\tilde W^{cs}_x(x)\cap
B(x,1)}\alpha_x(y)\int_{D^u_x(y)}\beta_x(z)h(z)dLeb_{D^u_x(y)}dLeb_{\tilde W^{cs}_x(x)}.
$$
Furthermore there exists a constant $C(f)>0$ such that $\frac 1{C(f)}<\alpha_x,\beta_x<C(f)$ for all $x\in M$.

For every $x\in M$, $n>0$ and $y\in\tilde W^{cs}_{x,r}(x)$, define $D^n_x(y)$ as
the connected component of $f^n(D^u_x(y))\cap B(f^n(x),r)$ containing $f^n(y)$. One can show by induction that, for every $x\in A_{\epsilon}$, $n\geq n_{\epsilon}$, $y\in\tilde W^{cs}_{x,r(n,r)}(x)$ and any $0\leq i\leq n$, the distance between $f^i(y)$ and the boundary of $D^i_x(y)$ (measured inside $f^i(D^u_x(y))$) is greater or equal than $r/2$ (this is because $d(f^i(x),f^i(y))\leq\frac r2$). Let $D^n_{x,y}$ be the ball of radius $r/2$ inside $D^n_x(y)$. Since $D^n_x(y)$ is tangent to $C^u$ for all $i>0$, we obtain that there exists a constant $C(C^u)>0$ such that $\hbox{vol}(D^n_{x,y})>C(C^u)r^u$ for all $x\in A_{\epsilon}$, $n\geq n_{\ep}$, $r<r_{\ep}$, and all $y\in\tilde W^{cs}_{x,r(n,r)}(x)$ ($C(C^u)$ is independent of $x$ and $n$). Also $D^n_{x,y}$ and at least $n$ pre-images of it are tangent to $C^u$, so uniformly contracting under $f^{-1}$, and because $d(f^i(x),f^i(y))<\frac r2$ for all $0\leq i\leq n$, we get that $f^{-n}(D^n_{x,y})\subset B_{d_n}(x,r)$, for all $x\in A_{\epsilon}, n\geq n_{\epsilon}, r<r_{\epsilon}$ and $y\in\tilde W^{cs}_{x,r(n,r)}(x)$.

In conclusion for every $x\in A_{\ep}$, $n\geq n_{\ep}$ and $r<r_{\ep}$, we have
$$
\cup_{y\in\tilde W^{cs}_{x,r(n,r)}(x)}f^{-n}(D^n_{x,y})\subset B_{d_n}(x,r).
$$
There exist a constant $C'(f)>0$ such that $\hbox{vol}(W^{cs}_{x,r(n,r)})\geq C'(f)r(n,r)^d$. Then
\begin{eqnarray*}
I&=&\int_{B_{d_n}(x,r)}\|\Lambda^uDf^n|_{TD^u_x}\| dLeb\\
&\geq&\int_{\tilde W^{cs}_{x,r(n,r)}(x)}\alpha_x(y)\int_{f^{-n}(D^n_{x,y})}\beta(z)\|\Lambda^uDf^n(z)|_{TD^u_x}\| dLeb_{D^u_x(y)}dLeb_{\tilde W^{cs}_{x,r(n,r)}(x)}\\
&\geq&\frac 1{C(f)^2}\int_{\tilde W^{cs}_{x,r(n,r)}(x)}\int_{f^{-n}(D^n_{x,y})}\|\Lambda^uDf^n(z)|_{TD^u_x}\| dLeb_{D^u_x(y)}dLeb_{\tilde W^{cs}_{x,r(n,r)}(x)}\\
&\geq&\frac 1{C(f)^2}\int_{\tilde W^{cs}_{x,r(n,r)}(x)}\hbox{vol}(D^n_{x,y})dLeb_{\tilde W^{cs}_{x,r(n,r)}(x)}\geq\frac{C(C^u)C'(f)}{C(f)^2}r^ur(n,r)^d\\
&=&C(f,\ep,r)e^{-nd(\la+2\ep)}.
\end{eqnarray*}

We also know that $E^{cs}\oplus E^u$ is a dominated splitting and $TD^u$ is close to $E^u$, so
there exists a constant $C''(f)>0$ such that
$$
\frac
1{C''(f)}\|\Lambda^uDf^n(z)|_{TD^u_x}\|<\|\Lambda^uDf^n(z)|_{E^u}\|<C''(f)\|\Lambda^uDf^n(z)|_{TD^u_x}\|,
$$
for all $x\in M$ and $n>0$. Consequently there exists $c(f,\ep,r)>0$ such that for every $x\in A_{\ep}$, $r<r_{\ep}$ and $n\geq n_{\ep}$, we have
$$
\int_{B_{d_n}(x,r)}\|\Lambda^uDf^n(z)|_{E^u}\| dLeb\geq c(f,\ep,r)e^{-nd(\la+2\ep)}.
$$

Now let $S\subset A_{\ep}$ be a maximal $(n,r)$-separated set in $A_{\ep}$. Then $S$ is also a $(n,2r)$-spanning set for $A_{\ep}$, so the cardinality of $S$ satisfies the inequality $|S|\geq N(n,2r,f,\mu)$. Then for $r<r_{\ep}$ and $n>n_{\ep}$ we have
\begin{eqnarray*}
\int_M\|\Lambda^uDf^n(z)|_{E^u}\| dLeb&\geq &\sum_{x\in S}\int_{B_{d_n}(x,r)}\|\Lambda^uDf^n(z)|_{E^u}\| dLeb\\
&\geq &c(f,\ep,r)N(n,2r,f,\mu)e^{-nd(\la+2\ep)},
\end{eqnarray*}
so
$$
\overline{v}_{E^u}(f)\geq -d\la-2d\ep+\limsup_{n\ra\8}\frac 1n\log N(n,2r,f,\mu),
$$
and taking $r\ra0$ we get that $\overline{v}_{E^u}(f)\geq h_{\mu}(f)-d\la-2d\ep$. But this is true for every $\ep>0$, so $\overline{v}_{E^u}(f)\geq h_{\mu}(f)-d\la$, and this finishes the proof of Theorem \ref{t}.

\section{Different types of volume growth}

Besides the integrated volume growth which appears in Theorem \ref{t}, there are other types of volume growth which can be associated to an invariant foliation, or more restrictively to an unstable foliation. We will consider again $f$ to be a $C^1$ diffeomorphism of the compact Riemannian manifold $M$ with a dominated splitting $TM=E^{cs}\oplus E^u$, with $E^u$ uniformly expanding, which implies that $E^u$ is uniquely integrable to form the invariant unstable foliation $W^u$ with $C^1$ leaves. Let $\tilde d$ be the Riemannian metric induced on the leaves of $W^u$, and consider the family of disks of radius one inside the leaves of the foliation $W^u$:
$$
\mathcal F(W^u)=\{ B_{\tilde d}(x,1)\subset W^u(x),\ x\in M\}.
$$
We define the volume growth, respectively the absolute volume growth of $W^u$ under $f$ to be
\begin{eqnarray*}
v_u(f)&=&\sup_{D\in\mathcal F(W^u)}\limsup_{n\ra\8}\frac 1n \log\hbox{vol}(f^n(D));\\
v_u^a(f)&=&\limsup_{n\ra\8}\frac 1n \sup_{D\in\mathcal F(W^u)}\log\hbox{vol}(f^n(D)).
\end{eqnarray*}
These invariants are independent of the Riemannian metric, and can be defined in fact for any invariant foliation with $C^1$ leaves (not only the unstable one). The exponential rate of growth of the volume of {\it any} $C^1$ disk inside a leaf of a foliation under iterates of $f$ is bounded from above by $v_u(f)$.

We will also consider a family of disks uniformly transverse to $E^{cs}$ and of uniformly bounded size: let $u=\dim(E^u)$, $\delta<\min_{x\in M}\angle(E^u(x),E^{cs}(x))$, and define
$$
\tilde F(W^u)=\{ D=\hbox{Image}(g):g\in C^1(B(0,1)\subset\R^u, M),\| Dg\|<1, \angle(TD,E^u)<\delta\}.
$$
We define the extended volume growth, respectively the absolute extended volume growth of $W^u$ with respect to $f$ to be
\begin{eqnarray*}
\tilde v_u(f)&=&\sup_{D\in\mathcal{\tilde F}(W^u)}\limsup_{n\ra\8}\frac 1n \log\hbox{vol}(f^n(D));\\
\tilde v_u^a(f)&=&\limsup_{n\ra\8}\frac 1n \sup_{D\in\mathcal{\tilde F}(W^u)}\log\hbox{vol}(f^n(D)).
\end{eqnarray*}
Again these invariants are independent of the Riemannian metric, and can be defined in fact for any invariant splitting. The exponential rate of growth of the volume of {\it any} $C^1$ disk uniformly transverse to $E^{cs}$ under iterates of $f$ is bounded from above by $\tilde v_u(f)$. We remind also that
$$
\overline{v}_u(f)=\limsup_{n\ra\8}\frac 1n\log\int_M\|\Lambda^uDf^n|_{E^u}\| dLeb.
$$

Clearly $v_u\leq v_u^a,\tilde v_u$ and $v_u^a,\tilde v_u\leq\tilde v_u^a$. Also $\overline v_u\leq\tilde v_u^a$, and if the disintegrations of the Lebesgue measure on $M$ along the leaves of $W^u$ are absolutely continuous with respect to the Lebesgue measure on leaves with densities uniformly bounded from above (which is the case for $W^u$ for $C^{1+\alpha}$ diffeomorphisms), then $\overline v_u\leq v_u^a$. It is easy to construct examples where $\overline v_u<v_u$. We are not aware of any example when $v_u,v_u^a,\tilde v_u$ and $\tilde v_u^a$ are different, it is possible that some of the inequalities above are in fact equalities for every partially hyperbolic diffeomorphism, and one always has the inequality $\overline v_u\leq v_u$. One can prove that all the five types of volume growth are equal when there exists a smooth closed form $\omega\in\Omega^u(M)$ which is non-degenerate on $W^u$. In fact in this case the volume growth is equal to the logarithm of the spectral radius of the map induced by $f$ on $H^u(M,\R)$, and is constant for small perturbations of $f$.

\begin{thm}
Let $f$ be a $C^1$ partially hyperbolic diffeomorphism on the compact manifold $M$ such that there exists a closed $u$-form $\omega$ which is non-degenerate on the unstable foliation $W^u$ (which has dimension $u$). Then all the five types of volume growth associated with the unstable foliation which we defined above are equal with $\log\hbox{spec}(f^*_u)$, where $f^*_u:H^u(M,\R)\ra H^u(M,\R)$ is the map induced by $f$ on the $u$-cohomology group of $M$. The same is true for all $C^1$ close enough diffeomorphisms.
\end{thm}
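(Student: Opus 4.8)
Throughout set $\rho=\mathrm{spec}(f^*_u)$, $u=\dim W^u$ and $d=\dim E^{cs}$. The plan is, first, to turn every one of the five growths into the exponential rate of a single cohomological quantity, and then to pin that rate to $\log\rho$ from below and from above. For the first step I use $\omega$ as a calibration. Since $E^u$ is uniformly expanding with a forward invariant cone field $C^u$, the forward image $f^n(D)$ of any disk $D\in\mathcal F(W^u)$ or $D\in\tilde{\mathcal F}(W^u)$ lies, for large $n$, inside a cone about $E^u$ so narrow that $\omega$ is nonzero on all of its oriented $u$-planes; orienting the connected image $f^n(D)$, the integrand of $\int_{f^n(D)}\omega$ then has constant sign and magnitude pinched between two positive multiples of the Riemannian density. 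Hence there are uniform constants $0<c<C$ with $c\,\mathrm{vol}(f^n(D))\le|\int_{f^n(D)}\omega|\le C\,\mathrm{vol}(f^n(D))$ for all such $D$ and all large $n$. Writing $\int_{f^n(D)}\omega=\int_D f^{n*}\omega$ and recalling $[f^{n*}\omega]=(f^*_u)^n[\omega]$, each volume growth equals the corresponding exponential rate of $|\int_D f^{n*}\omega|$. I also record that $f^*_u$ depends on $f$ only up to homotopy, so $\rho$ is a homotopy invariant; this will give the stability statement for free.

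The conceptual heart is a Perron--Frobenius argument in cohomology. Let $\mathcal C\subset H^u(M,\R)$ be the set of classes admitting a representative strictly positive on the oriented leaves of $W^u$. Because $f$ preserves $W^u$ and its orientation with positive unstable Jacobian, $f^*_u$ maps $\mathcal C$ into itself, and $[\omega]$ lies in the interior of $\mathcal C$ by non-degeneracy. The cone $\mathcal C$ is salient: if both $[\alpha]$ and $-[\alpha]$ lay in $\mathcal C$, their representatives would differ by an \emph{exact} $u$-form strictly positive on $W^u$, but integrating such a form over $f^n(D)$ and applying Stokes bounds its leaf integral by the $(u-1)$-volume of $\partial f^n(D)$, which by the leafwise isoperimetric inequality is of smaller order than $\mathrm{vol}(f^n(D))$ for expanding leaves, a contradiction. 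A salient closed cone with nonempty interior that is invariant under a linear map has its spectral radius as an eigenvalue with eigenvector in the cone, and every interior vector realizes that spectral radius; applied to $[\omega]\in\mathrm{int}\,\mathcal C$ this gives the cone estimate $\limsup_n\frac1n\log\|(f^*_u)^n[\omega]\|=\log\rho$. This is precisely the statement that the calibration $\omega$ sees the \emph{full} spectral radius, and it is where closedness and non-degeneracy are jointly essential.

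It remains to estimate $\int_D f^{n*}\omega$. For the upper bound I cap: fix a $u$-chain $K_0$ with $\partial K_0=\partial D$, so that $Z_0=D-K_0$ is a cycle and $\int_D f^{n*}\omega=\langle(f^*_u)^n[\omega],[Z_0]\rangle+\int_{f^n(K_0)}\omega$. The pairing term is bounded by $C\rho^n p(n)$, with $p$ polynomial, via a fixed basis of closed forms and the Jordan form of $f^*_u$. One needs the cap term to grow no faster than $\rho^n$; this is immediate when $u\le d$, since then $K_0$ can be taken tangent to $E^{cs}$, where $f$ does not expand volume, so the term is $O(1)$, giving $\tilde v_u^a\le\log\rho$ uniformly in $D$ (the boundaries $\partial D$ have uniformly bounded geometry, so $K_0$ may be chosen uniformly). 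The honest main obstacle is the cap term when $u>d$: no $u$-chain avoids the unstable directions, and one must choose $K_0$ to minimize its growth transverse to the most expanded subspace, or argue directly with the foliated currents $[f^n(D)]$, to see that its rate is still at most $\log\rho$. For the lower bounds, the cone estimate furnishes a fixed cycle $\Sigma$ with $|\int_{f^n(\Sigma)}\omega|=|\langle(f^*_u)^n[\omega],[\Sigma]\rangle|\gtrsim\rho^n$ along a subsequence (a fixed basis class detects the growing norm infinitely often); since $|\int_{f^n(\Sigma)}\omega|\le\|\omega\|_\infty\mathrm{vol}(f^n(\Sigma))$ this forces $\mathrm{vol}(f^n(\Sigma))\gtrsim\rho^n$. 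Domination draws the pieces of $\Sigma$ into $C^u$, where they shadow unstable plaques, transferring this growth to leaf disks and yielding $v_u\ge\log\rho$; integrating the same estimate over a fixed-width transverse family of plaques gives $\overline v_u\ge\log\rho$.

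Finally, combining $\tilde v_u^a\le\log\rho$ with the trivial inequalities $v_u\le v_u^a\le\tilde v_u^a$, $v_u\le\tilde v_u\le\tilde v_u^a$ and $\overline v_u\le\tilde v_u^a$ forces all five growths to be $\le\log\rho$, while $v_u\ge\log\rho$ and $\overline v_u\ge\log\rho$ push them all back up to $\log\rho$. For the stability statement, partial hyperbolicity with $E^u$ uniformly expanding and the pointwise non-degeneracy of $\omega$ on $E^u$ are $C^1$-open conditions, so any $C^1$-close $g$ still satisfies the hypotheses with an unstable foliation $C^0$-close to $W^u$; as $g$ is homotopic to $f$ we have $g^*_u=f^*_u$, hence $\mathrm{spec}(g^*_u)=\rho$, and the theorem applied to $g$ gives all five growths equal to $\log\rho$.
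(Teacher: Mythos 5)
There is a genuine gap, and you have in fact flagged it yourself: your upper bound $\tilde v_u^a\le\log\rho$ rests on capping a disk $D$ by a $u$-chain $K_0$ and then controlling $\int_{f^n(K_0)}\omega$, and you concede that when $u>\dim E^{cs}$ you do not know how to choose $K_0$ so that this term grows no faster than $\rho^n$. Since the whole sandwich in your last paragraph hinges on $\tilde v_u^a\le\log\rho$, the argument is incomplete precisely at its load-bearing step. The paper avoids capping altogether: writing $f^{*l}\omega-\sum_i a_{i1}^l\omega_i=d\omega^l$ for a fixed basis $[\omega_i]$ of $H^u(M,\R)$, it applies Stokes on $f^n(D)$ itself, so the error term is $\int_{\partial f^n(D)}\omega^l$, bounded by $C_l\,\mathrm{vol}(\partial f^n(D))$. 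The key technical input is then Lemma \ref{d}: for \emph{almost every} radius $r$, $\mathrm{vol}(f^n(\partial D_r))/\mathrm{vol}(f^n(D_r))\to 0$ exponentially (proved by a Fubini/derivative argument on $F_n(r)=\mathrm{vol}(f^n(D_r))$, not by an isoperimetric inequality — the ratio can be bad for exceptional radii, which is why the a.e. statement is needed). Note that your Perron--Frobenius salience argument also silently invokes this same boundary-versus-interior comparison ("leafwise isoperimetric inequality"), so you would need Lemma \ref{d} in any case.

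Two further points. First, your lower bound $v_u\ge\log\rho$ is asserted via "domination draws the pieces of $\Sigma$ into $C^u$," but an arbitrary $u$-cycle $\Sigma$ need not have any piece tangent to the unstable cone, and the growth of $|\int_{f^n(\Sigma)}\omega|$ does not by itself localize onto unstable plaques; the paper simply quotes \cite{SX} for $\log\mathrm{spec}(f_u^*)\le\tilde v_u(f)$ and does not claim this transfer is easy. Second, the paper's route to equality of the five growth rates is structurally different from yours: rather than bounding each one by $\log\rho$ directly, it first proves a comparison lemma (nearby $u$-disks of sizes $r$ and $R$ cobound a cylinder $S$ tangent to the cone $D^u$, and Stokes applied to $\omega$ gives $\mathrm{vol}(f^n(D_x))\le C\,\mathrm{vol}(f^n(D_y))$ uniformly in $n$), from which $\tilde v_u^a\le\overline v_u\le v_u$ follow by a local Fubini argument; only then is $v_u$ identified with $\log\mathrm{spec}(f_u^*)$. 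Your cohomological cone picture is a nice conceptual framing of why $[\omega]$ sees the full spectral radius, but as written the proof does not close.
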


\begin{proof}
The main idea of the proof is that the rate of growth of nearby disks in the unstable direction can be related using the closed form $\omega$. Fix two continuous invariant cone fields $C^u$ and $D^u$ around the unstable bundle $E^u$ such that $\omega$ is non-degenerate on the closure of $D^u$ and $C^u$ is strictly inside $D^u$. We say that $D$ is an $u$-disc of size $r$ centered at $x$ if $D$ is the image of a $C^1$ embedding of the unit $u$-dimensional disk into $M$, tangent to $C^u$, and, with respect to the metric induced on $D$, $d_D(x,y)=r$ for all $y\in\partial D$. We will assume that every $u$-disk $D$ is oriented such that $\int_D\omega>0$. We have the following Lemma.

\begin{lem}\label{l}
Let $f$ be a partially hyperbolic diffeomorphism on the compact manifold $M$ such that there exists a closed $u$-form $\omega$ which is non-degenerate on the unstable foliation $W^u$ (which has dimension $u$), and $C^u$ and $D^u$ be invariant cones as above. Then there exist $\delta,r,R,C>0$ suh that for any $x,y\in M$, $d(x,y)<\delta$, any $u$-disk $D_x$ of size $r$, any $u$-disk $D_y$ of size $R$, and any $n>0$, we have $\hbox{vol}(f^n(D_x))\leq C\hbox{vol}(f^n(D_y))$.
\end{lem}

\begin{proof}
We choose $\delta$, $R$ and $\frac rR$ small enough such that for any $u$-disks $D_x$ and $D_y$ of sizes $r$, respectively $R$, with $d(x,y)<\delta$, there exists a $C^1$ $u$-dimensional submanifold $S$ tangent to $D^u$, diffeomorphic with $\mathbb S^{u-1}\times [0,1]$, such that $S+D_x-D_y$ is the boundary (in the simplicial sense) of a $C^1$, $(u+1)$-dimensional submanifold ($S$ is again oriented such that $\int_S\omega>0$). There exists $C_1>0$ such that for every $u$-dimensional submanifold $D\subset M$ tangent to $C^u$ we have $\frac 1{C_1}\hbox{vol}(D)\leq|\int_D\omega|\leq C_1\hbox{vol}(D)$. If $f^n$ preserves the orientation of $D_x,D_y$ and $S$, we obtain:
\begin{eqnarray*}
\hbox{vol}(f^n(D_x))&\leq &C_1\int_{f^n(D_x)}\omega=C_1\int_{f^n(D_y)}\omega-C_1\int_{f^n(S)}\omega\\
&\leq &C_1\int_{f^n(D_y)}\omega\leq C_1^2\hbox{vol}(f^n(D_y)).
\end{eqnarray*}

We used the fact that $\omega$ is closed and non-degenerate on the invariant cone field $D^u$. If $f^n$ does not preserve the orientation of $D_x,D_y$ or $S$, then it must reverse all of them, because iterates of submanifolds tangent to $D^u$ stay in $D^u$ and $D_x\cup D_y\cup S$ is connected, and the same result follows. Let $C=C_1^2$ q.e.d.
\end{proof}

In order to prove that all the different types of volume growth are equal, it is enough to show that $\tilde v^a_u(f)\leq\overline{v}_u(f)\leq v_u(f)$.

\noindent
{\bf Claim 1} $\tilde v^a_u(f)\leq\overline{v}_u(f)$.

We will use the definitions from the previous section for the local foliations $\tilde W^{cs}_x$ and $D^u_x$. For every $x\in M$, let
$$
U_x=\cup_{y\in\tilde W^{cs}_{x,\delta}(x)}D^u_{x,y}(R),
$$
where $D^u_{x,y}(R)$ is the ball of radius $R$ centered at $y$ in $D^u_{x,y}$, the leaf of the local foliation $D^u_x$ passing through $y$ (we can assume that $R$ and $\delta$ are small enough).

There exists a sequence of disks $D_n\in\tilde F(W^u)$, such that
$$
\tilde v^a_u(f)=\limsup_{n\ra\8}\frac 1n\log\hbox{vol}(f^n(D_n)).
$$
Without loss of generality we can assume that $D_n$ is a $u$-disk of size $r$ centered at $x_n$. We obtain ($C$ represents different constants independent of $n$):
\begin{eqnarray*}
\frac 1n\log\int_M\|\Lambda^uDf^n|_{E^u}\| dLeb&>&\frac 1n\log\int_{U_{x_n}}\|\Lambda^uDf^n|_{E^u}\| dLeb\\
&\geq&\frac 1n\log\int_{U_{x_n}}C\|\Lambda^uDf^n|_{TD^u_{x_n}}\| dLeb\\
&\geq&\frac 1n\log \int_{\tilde W^{cs}_{x_n,\delta}}C\alpha_{x_n}\int_{D^u_{x_n,y}}\beta_{x_n}\|\Lambda^uDf^n|_{TD^u_{x_n}}\| dLeb\\
&\geq&\frac 1n\log \int_{\tilde W^{cs}_{x_n,\delta}}C\hbox{vol}(f^n(D^u_{x_n,y}(R)))dLeb\\
&\geq&\frac 1n\log C+\frac 1n\log\hbox{vol}(f^n(D_n)).
\end{eqnarray*}
We used the fact that $\|\Lambda^uDf^n|_{E^u}\|$ and $\|\Lambda^uDf^n|_{TD^u_{x_n}}\|$ are comparable, $\alpha_x$ and $\beta_x$ are uniformly bounded away from zero, and Lemma \ref{l} for the last inequality. By taking the limit for $n\ra\8$ we obtain $\tilde v^a_u(f)\leq\overline{v}_u(f)$.

\noindent
{\bf Claim 2}: $\overline{v}_u(f)\leq v_u(f)$.

For every $x\in M$, let
$$
V_x=\cup_{y\in\tilde W^{cs}_{x,\delta}(x)}D^u_{x,y}(r).
$$
Because $M$ is compact, there exist $x_1,x_2,\dots x_l\in M$ such that $M=\cup_{i=1}^lV_{x_i}$. Then there exists $1\leq k\leq l$ such that
\begin{eqnarray*}
\overline{v}_u(f)&=&\limsup_{n\ra\8}\frac 1n\log\int_M\|\Lambda^uDf^n|_{E^u}\| dLeb\\
&=&\limsup_{n\ra\8}\frac 1n\log\int_{V_{x_k}}\|\Lambda^uDf^n|_{E^u}\| dLeb.
\end{eqnarray*}
For simplicity we will denote $x_k=x$. Again letting $C$ be different constants independent on $n$, we obtain:
\begin{eqnarray*}
\overline{v}_u(f)&=&\limsup_{n\ra\8}\frac 1n\log\int_{V_x}\|\Lambda^uDf^n|_{E^u}\| dLeb\\
&\leq&\limsup_{n\ra\8}\frac 1n\log\int_{V_x}C\|\Lambda^uDf^n|_{TD^u_x}\| dLeb\\
&=&\limsup_{n\ra\8}\frac 1n\log \int_{\tilde W^{cs}_{x,\delta}}C\alpha_x\int_{D^u_{x,y}(r)}\beta_x\|\Lambda^uDf^n|_{TD^u_x}\| dLeb\\
&\leq&\limsup_{n\ra\8}\frac 1n\log \int_{\tilde W^{cs}_{x,\delta}}C\hbox{vol}(f^n(D^u_{x,y}(r)))dLeb\\
&\leq&\limsup_{n\ra\8}\frac 1n\log\hbox{vol}(f^n(W^u_R(x)))\leq v_u(f),
\end{eqnarray*}
where $W^u_R(x)$ is the ball or radius $R$ centered at $x$ inside $W^u(x)$. We used again Lemma \ref{l}, the fact that $\alpha_x$ and $\beta_x$ are uniformly bounded from above, and the fact that $\|\Lambda^uDf^n|_{E^u}\|$ and $\|\Lambda^uDf^n|_{TD^u_{x_n}}\|$ are comparable.

We remark that whenever Lemma \ref{l} is true for a given partially hyperbolic diffeomorphism, it follows from the above argument that all the types of volume growth associated to the unstable foliation are equal (this could be applied for uniformly hyperbolic maps for example).

\noindent
{\bf Claim 3}: $v_u(f)=\log\hbox{spec}(f^*_u)$.

We could prove this claim using the results from \cite{Sa}, however we prefer to give a different proof, avoiding the use of currents. In order to relate the volume growth of disks with closed differential forms we need the following lemma, which says that for almost every u-disk (in some sense), the volume of the iterates of the disk grows faster than the volume of the iterates of its boundary.

\begin{lem}\label{d}
Let $f$ be a $C^1$ diffeomorphism of the compact Riemannian manifold $M$, and $\alpha:D(0,1)\subset\R^u\ra M$ a $C^1$ embedding (its image is a $C^1$ disk of dimension $u$). Assume that $f$ is uniformly expanding on $D_1=\alpha(D(0,1))$, and denote $D_r=\alpha(D(0,r))$ for $0<r\leq 1$. Then for almost every $r\in(0,1]$, we have
$$
\lim_{n\ra\8}\frac{\hbox{vol}(f^n(\partial D_r))}{\hbox{vol}(f^n(D_r))}=0,
$$
and the convergence is exponential.
\end{lem}

\begin{proof}
The uniform expansion of $f$ on $D_1$ means that there exist $c>0$, $\lambda>1$ such that
$$
\| Df^n(v)\|\geq c\lambda^n\| v\|, \ \forall n\in\N,\ \forall v\in TD_1.
$$
Let $F_n(r)=\hbox{vol}(f^n(D_r))$. Then
\begin{eqnarray*}
F_n(r)&=&\int_{D_r}\|\Lambda^uDf^n|_{TD_r}\| dLeb=\int_{D(0,r)}\|\Lambda^uDf^n(\alpha(x))|_{TD_r}\|\cdot |D\alpha|dx\\
&=&\int_0^r\int_{S(0,t)}\|\Lambda^uDf^n(\alpha(y))|_{TD_r}\|\cdot |D\alpha(y)|dydt,
\end{eqnarray*}
so we have
$$F_n\rq{}(r)=\int_{S(0,r)}\|\Lambda^uDf^n(\alpha(y))|_{TD_r}\|\cdot |D\alpha(y)|dy.
$$
On the other hand, using the fact that $\|\Lambda^uDf^n(\alpha(y))|_{TD_r}\|\geq
c\lambda^n\|\Lambda^{u-1}Df^n(\alpha(y))|_{TD_r}\|$, for some fixed constant $c>0$ and every $y\in
D(0,1)$ and $n>0$, we obtain
\begin{eqnarray*}
\hbox{vol}(f^n(\partial D_r))&=&\int_{\partial D_r}\|\Lambda^{u-1}Df^n|_{T\partial D_r}\| dLeb\\
&=&\int_{S(0,r)}\|\Lambda^{u-1}Df^n(\alpha(y))|_{T\partial D_r}\|\cdot |D\alpha(y)|_{S(0,r)}|dy\\
&\leq&\frac C{\lambda^n}F_n\rq{}(r),
\end{eqnarray*}
for some $C$ which depends only on $f$ and $\alpha$. If $\mu$ is an upper bound for
$\|\Lambda^uDf\|$ on $M$, we get that $F_n(1)\leq C\rq{}\mu^n$ for some $C\rq{}>0$, or $\log
F_n(1)\leq n\log\mu +\log C\rq{}$.

Let $1<\lambda\rq{}<\lambda$ and $r_n\in(0,1)$ such that $F_n(r_n)=1$ (clearly $r_n$ is decreasing to zero). Let $R_n=\{ t\in[r_n,1]:\ (\log F_n(t))\rq{}>\lambda\rq{}^n\}$. Because $\log F_n$ is increasing we get that $Leb(R_n)\leq\frac{n\log\mu +\log C\rq}{\lambda\rq{}^n}$. If we denote by $S_n=\cup_{i\geq n}R_i$, it is easy to see that $Leb(S_n)$ converges to zero (this is because the series $\sum_{n=1}^{\8}\frac{n\log\mu +\log C\rq}{\lambda\rq{}^n}$ is convergent). Let $S=\cap_{n=1}^{\8}S_n$, so $Leb(S)=0$.

For every $r\in(0,1]\setminus S$, there exists $n_0\in\N$ such that $r>r_{n_0}$ and $r\notin S_{n_0}$, or $(\log F_n(r))\rq{}\leq\lambda\rq{}^n$, for all $n>n_0$. But this implies that $\frac{F_n\rq{}(r)}{F_n(r)}\leq\lambda\rq{}^n$, which together with the inequality $\hbox{vol}(f^n(\partial D_r))\leq\frac C{\lambda^n}F_n\rq{}(r)$ gives that $\frac{\hbox{vol}(f^n(\partial D_r))}{\hbox{vol}(f^n(D_r))}\leq C\left(\frac{\lambda\rq{}}{\lambda}\right)^n$ for all $n>n_0$, which finishes the proof.
\end{proof}

We continue with the proof of the Claim 3. Let $\omega_1=\omega,\omega_2,\omega_3,\dots\omega_k$ be closed forms such that the cohomology classes $[\omega_1],[\omega_2],\dots[\omega_k]$ form a basis in $H^u(M,\R)$. Let $A$ be the matrix corresponding to the linear map $f_u^*:H^u(M,\R)\ra H^u(M,\R)$, and denote $A^l=(a_{ij}^l)_{1\leq i,j\leq k}$. Then $[f^{*l}(\omega)]=\sum_{i=1}^ka_{i1}^l[\omega_i]$, so $f^{*l}(\omega)-\sum_{i=1}^ka_{i1}^l\omega_i=d\omega^l$ for some $(u-1)$-form $\omega^l$. If $D$ is a u-disk then
\begin{eqnarray*}
\hbox{vol}(f^{n+l}(D))&\leq&C\left|\int_{f^{n+l}(D)}\omega\right|=C\left|\int_{f^n(D)}f^{*l}(\omega)\right|\\
&\leq&C\left|\sum_{i=1}^ka_{i1}^l\int_{f^n(D)}\omega_i\right|+C\left|\int_{\partial f^n(D)}\omega^l\right|\\
&\leq&C\rq{}\hbox{vol}(f^n(D))\max_{1\leq i\leq k}|a_{i1}^l|+C_l\hbox{vol}(\partial f^n(D)),
\end{eqnarray*}
where $C\rq{}$ is independent of $n,l$ and $C_l$ is independent of $n$. If we have that
$\lim_{n\ra\8}\frac{\hbox{vol}(f^n(\partial D_r))}{\hbox{vol}(f^n(D_r))}=0$ (which is true for
most of the disks because the Lemma \ref{d}), then
$$
\max_{1\leq i\leq k}|a_{i1}^l|\geq\frac 1{C\rq}\limsup_{n\ra\8}\frac{\hbox{vol}(f^{n+l}(D))}{\hbox{vol}(f^n(D))}
$$
From here one can easily obtain that
$$
\frac 1l\log(\max_{1\leq i\leq k}|a_{i1}^l|)+\frac{\log C\rq{}}l\geq\limsup_{n\ra\8}\frac 1n\log(\hbox{vol}(f^n(D))),
$$
and then
$$
\log\hbox{spec}(f^*_u)\geq\limsup_{l\ra\8}\frac 1l\log(\max_{1\leq i\leq k}|a_{i1}^l|)\geq\limsup_{n\ra\8}\frac 1n\log(\hbox{vol}(f^n(D))).
$$
But now every $u$-disk can be enlarged to a disk that satisfies the conclusion of Lemma \ref{d}, so $\log\hbox{spec}(f^*_u)\geq\tilde{v}_u(f)$. The opposite inequality, $\log\hbox{spec}(f^*_u)\leq\tilde{v}_u(f)$, is also true for every partially hyperbolic diffeomorphism (for a proof see \cite{SX} for example), so indeed we get that $\log\hbox{spec}(f^*_u)=v_u(f)$.

The fact that the result holds also for nearby diffeomorphisms follows from the fact that the conditions from the hypothesis are $C^1$ open.
\end{proof}

It is not difficult to show that all the types of volume growth defined above are a lower bound for the topological entropy of $f$ (it is enough to prove it for $\tilde v_u^a$, see \cite{SX} for example). As a consequence of the results from this paper, and using basically the same methods from \cite{HSX}, we get the following corollaries.

\begin{cor}
If $f$ is a $C^1$ partially hyperbolic diffeomorphism with the dimension of the center bundle equal to one, then
$$
h(f)=\max\{\overline v_u(f),\overline v_u(f^{-1})\}.
$$
\end{cor}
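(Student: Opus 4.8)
The plan is to prove the two inequalities separately; the lower bound is essentially immediate and the upper bound carries all the content. For the lower bound, I would invoke the inequalities collected at the start of this section together with the remark, just stated, that every type of volume growth is a lower bound for topological entropy: since $\overline{v}_u\leq\tilde v_u^a$ and $\tilde v_u^a(g)\leq h(g)$ for $g=f,f^{-1}$, and since $h(f)=h(f^{-1})$, we get at once $\max\{\overline{v}_u(f),\overline{v}_u(f^{-1})\}\leq h(f)$.

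For the upper bound I would use the variational principle $h(f)=\sup_\mu h_\mu(f)$ over ergodic invariant measures $\mu$, and bound each $h_\mu(f)$ by $\max\{\overline{v}_u(f),\overline{v}_u(f^{-1})\}$. The point where $\dim E^c=1$ enters is that for an ergodic $\mu$ the one-dimensional center bundle carries a single Lyapunov exponent $\lambda^c$, whose sign gives a clean dichotomy. I would split into two (overlapping) cases, $\lambda^c\leq 0$ and $\lambda^c\geq 0$, applying Theorem \ref{t} to $f$ in the first case and to $f^{-1}$ in the second. In the case $\lambda^c\leq 0$ I use the dominated splitting $TM=E^{cs}\oplus E^u$ with $E^{cs}=E^s\oplus E^c$: the stable exponents are negative and $\lambda^c\leq 0$, so $\lambda_{E^{cs}}^+\leq 0$, the correction term $\dim(E^{cs})\lambda_{E^{cs}}^+$ in Theorem \ref{t} is non-positive, and $h_\mu(f)\leq\overline{v}_u(f)$. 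In the case $\lambda^c\geq 0$ I pass to $f^{-1}$: the splitting $TM=E^{cu}\oplus E^s$ with $E^{cu}=E^c\oplus E^u$ is dominated for $f^{-1}$ with $E^s$ uniformly expanding, since both the domination and the contraction/expansion reverse under $f^{-1}$. Theorem \ref{t} then applies with the roles $E^{cs}\leftrightarrow E^{cu}$ and $E^u\leftrightarrow E^s$; the Lyapunov exponents of $f^{-1}$ on $E^{cu}$ are $-\lambda^c$ together with the negatives of the positive unstable exponents, all $\leq 0$, so the correction term again vanishes and $h_\mu(f^{-1})\leq\overline{v}_{E^s}(f^{-1})=\overline{v}_u(f^{-1})$. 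Since $h_\mu(f)=h_\mu(f^{-1})$, this case is done as well.

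Combining the two cases, every ergodic $\mu$ satisfies $h_\mu(f)\leq\max\{\overline{v}_u(f),\overline{v}_u(f^{-1})\}$, and the variational principle upgrades this to the same bound for $h(f)$, which together with the lower bound yields the claimed equality. The main thing to get right — and the only genuine obstacle — is the bookkeeping that keeps the correction term non-positive: one must verify that $\dim E^c=1$ forces $\lambda^c$ to be a single number, so that at least one of the two applications of Theorem \ref{t} has vanishing center-stable contribution, and one must check carefully that $f^{-1}$ truly satisfies the hypotheses of Theorem \ref{t} with the reversed splitting $E^{cu}\oplus E^s$. Both verifications are routine once the splittings are set up correctly, and no further estimates beyond Theorem \ref{t} are needed.
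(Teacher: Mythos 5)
Your argument is correct and is precisely the method the paper alludes to (it gives no written proof, deferring to the techniques of \cite{HSX}): the lower bound from $\overline v_u\leq\tilde v_u^a\leq h$ applied to $f$ and $f^{-1}$, and the upper bound from the variational principle plus the dichotomy on the sign of the single center exponent, applying Theorem \ref{t} to $f$ when $\lambda^c\leq 0$ and to $f^{-1}$ with the reversed dominated splitting $E^{cu}\oplus E^s$ when $\lambda^c\geq 0$. No gaps; the bookkeeping you flag (single center exponent, hypotheses of Theorem \ref{t} for $f^{-1}$) is indeed the only thing to check and it goes through as you describe.
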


\begin{cor}
If $f$ is a $C^1$ partially hyperbolic diffeomorphism with the dimension of the center bundle equal to one, and there are closed $u$- and $s$-forms non-degenerated on the unstable, respectively the stable foliation, then the topological entropy is locally constant at $f$ in the space of $C^1$ diffeomorphisms, and is in fact equal to $\max\{\log\hbox{spec}f^*_u, \log\hbox{spec}f^*_{u+1}\}$.
\end{cor}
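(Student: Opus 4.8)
The plan is to deduce this corollary by combining the previous corollary with Theorem 2 and a Poincar\'e duality argument, and then to extract local constancy from the purely cohomological nature of the resulting formula. First I would invoke the previous corollary, which applies because $f$ is partially hyperbolic with one-dimensional center: it gives $h(f)=\max\{\overline v_u(f),\overline v_u(f^{-1})\}$. The whole task then reduces to identifying each of the two integrated volume growths with the logarithm of a spectral radius on cohomology.

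For the first term, the hypothesis provides a closed $u$-form $\omega$ non-degenerate on $W^u$, so Theorem 2 applies directly to $f$ and yields $\overline v_u(f)=\log\hbox{spec}(f^*_u)$. For the second term I would pass to $f^{-1}$: its unstable foliation is precisely the stable foliation $W^s$ of $f$, of dimension $s=\dim(E^s)$, and by hypothesis there is a closed $s$-form non-degenerate on $W^s$. Hence $f^{-1}$ satisfies the hypotheses of Theorem 2 as well, and applying it to $f^{-1}$ gives $\overline v_u(f^{-1})=\log\hbox{spec}((f^{-1})^*_s)$.

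The remaining algebraic step is to rewrite $\hbox{spec}((f^{-1})^*_s)$ in terms of the action of $f$ on $H^{u+1}(M,\R)$. Here I would use that $\dim M=s+1+u$, so $s=\dim M-(u+1)$, and that the cup-product pairing $H^s(M,\R)\times H^{u+1}(M,\R)\to H^{\dim M}(M,\R)\cong\R$ is non-degenerate. Since $f$ is a diffeomorphism it preserves cup products and acts on $H^{\dim M}$ by its degree $\pm 1$, so with respect to this pairing $f^*$ on $H^{u+1}$ is, up to the sign $\deg(f)$, the inverse transpose of $f^*$ on $H^s$. Consequently the eigenvalues of $(f^{-1})^*_s=(f^*_s)^{-1}$ coincide up to sign with those of $f^*_{u+1}$, whence $\hbox{spec}((f^{-1})^*_s)=\hbox{spec}(f^*_{u+1})$. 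Substituting into the formula from the previous corollary gives $h(f)=\max\{\log\hbox{spec}(f^*_u),\log\hbox{spec}(f^*_{u+1})\}$.

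Finally, for local constancy I would argue as follows. Partial hyperbolicity with one-dimensional center and the existence of closed forms non-degenerate on $W^u$ and on $W^s$ are $C^1$-open conditions, so every $g$ sufficiently $C^1$-close to $f$ satisfies the same hypotheses, and the formula above holds verbatim for $g$. But $g$ is homotopic to $f$, so $g^*_u=f^*_u$ and $g^*_{u+1}=f^*_{u+1}$ as linear maps on cohomology; thus $\log\hbox{spec}(g^*_u)=\log\hbox{spec}(f^*_u)$ and likewise in degree $u+1$, giving $h(g)=h(f)$ for all nearby $g$. The main obstacle I anticipate is the Poincar\'e duality bookkeeping---correctly tracking the orientation and degree signs so that the spectral radii (and not merely the spectra up to inversion) match---together with verifying that the ``nearby diffeomorphism'' clauses of Theorem 2 and of the previous corollary can be applied uniformly on a single $C^1$-neighborhood of $f$.
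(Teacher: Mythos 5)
Your proposal is correct and follows exactly the route the paper intends (the paper gives no explicit proof of this corollary, merely invoking Theorem 2, the preceding corollary, and the methods of \cite{HSX}): combine $h(f)=\max\{\overline v_u(f),\overline v_u(f^{-1})\}$ with Theorem 2 applied to both $f$ and $f^{-1}$, convert $\hbox{spec}((f^{-1})^*_s)$ into $\hbox{spec}(f^*_{u+1})$ by Poincar\'e duality, and obtain local constancy from the $C^1$-openness of the hypotheses together with the homotopy invariance of the action on cohomology. The only point worth flagging is that the Poincar\'e duality step (and arguably the statement itself) tacitly assumes $M$ is orientable, a hypothesis the paper also leaves implicit.
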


\begin{cor}
If $f$ is a $C^1$ partially hyperbolic diffeomorphism with the dimension of the center bundle equal to two, and $\mu$ is an ergodic invariant measure for $f$ such that $h_{\mu}(f)>\max\{\overline v_u(f),\overline v_u(f^{-1})\}$, then $\mu$ is hyperbolic.
\end{cor}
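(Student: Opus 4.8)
The plan is to argue by contradiction. Suppose $\mu$ is not hyperbolic; I will show that $h_\mu(f)\le\max\{\overline{v}_u(f),\overline{v}_u(f^{-1})\}$, contradicting the hypothesis. Write the partially hyperbolic splitting as $TM=E^s\oplus E^c\oplus E^u$ with $\dim E^c=2$, and let $\lambda_1^c\le\lambda_2^c$ be the two center Lyapunov exponents of $\mu$. Since $E^u$ is uniformly expanding and $E^s$ uniformly contracting, every Lyapunov exponent coming from $E^u$ (resp. $E^s$) is uniformly positive (resp. negative), so the only exponents that can vanish are the center ones. Hence non-hyperbolicity forces $\lambda_1^c=0$ or $\lambda_2^c=0$. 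This is exactly where $\dim E^c=2$ enters: a vanishing center exponent must be either the largest or the smallest of the two center exponents.

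First I would record how the Lyapunov exponents sit inside the dominated splitting. Since the Lyapunov splitting is subordinate to the dominated splitting, the maximal Lyapunov exponent of $f$ on $E^{cs}=E^s\oplus E^c$ is the top center exponent, $\lambda_{E^{cs}}^+=\lambda_2^c$ (the exponents on $E^s$ are negative and are dominated by the center ones). Symmetrically, for $f^{-1}$ the dominated splitting becomes $TM=E^{cu}\oplus E^s$, with $E^s$ now uniformly expanding (as it is uniformly contracting for $f$); here $E^s$ plays the role of the unstable bundle, its integrated volume growth is by definition $\overline{v}_u(f^{-1})$, and the maximal Lyapunov exponent of $f^{-1}$ on $E^{cu}=E^u\oplus E^c$ equals $-\lambda_1^c$ (the $f^{-1}$-exponents on $E^u$ are negative, while on $E^c$ they are $-\lambda_1^c\ge-\lambda_2^c$, and $E^c$ dominates $E^u$ for $f^{-1}$).

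Now I would split into the two cases. If $\lambda_2^c=0$, apply Theorem \ref{t} to $f$ with the splitting $E^{cs}\oplus E^u$:
$$
h_\mu(f)\le\overline{v}_u(f)+\dim(E^{cs})\,\lambda_{E^{cs}}^+=\overline{v}_u(f)+\dim(E^{cs})\cdot 0=\overline{v}_u(f).
$$
If instead $\lambda_1^c=0$, apply Theorem \ref{t} to $f^{-1}$ with the splitting $E^{cu}\oplus E^s$:
$$
h_\mu(f^{-1})\le\overline{v}_u(f^{-1})+\dim(E^{cu})\cdot(-\lambda_1^c)=\overline{v}_u(f^{-1}).
$$
Since the metric entropy is invariant under inversion, $h_\mu(f)=h_\mu(f^{-1})$, so in either case $h_\mu(f)\le\max\{\overline{v}_u(f),\overline{v}_u(f^{-1})\}$, the desired contradiction; therefore $\mu$ is hyperbolic.

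The main obstacle here is not a hard estimate — everything reduces to Theorem \ref{t} applied to $f$ and to $f^{-1}$ — but rather the bookkeeping relating the Lyapunov exponents to the dominated splitting: one must check carefully that $\lambda_{E^{cs}}^+=\lambda_2^c$ for $f$ and that $\lambda_{E^{cu}}^+=-\lambda_1^c$ for $f^{-1}$, and correctly identify which bundle is the expanding one for $f^{-1}$. It is also worth verifying explicitly that $\dim E^c=2$ is essential: for a higher-dimensional center a zero exponent could be an interior center exponent, neither maximal nor minimal, and then neither application of Theorem \ref{t} would annihilate the center contribution.
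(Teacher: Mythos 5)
Your proof is correct and is exactly the intended argument: the paper gives no written proof of this corollary, deferring to the methods of \cite{HSX}, and those methods amount precisely to what you do --- observe that a non-hyperbolic $\mu$ must have a vanishing center exponent, which for a two-dimensional center is necessarily the top exponent of $E^{cs}$ for $f$ or of $E^{cu}$ for $f^{-1}$, and then apply Theorem \ref{t} to $f$ or to $f^{-1}$ accordingly. Your bookkeeping of which exponent is extremal for which map, and the remark on why $\dim E^c=2$ is essential, are both accurate.
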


\begin{cor}
If $f$ is a $C^{1+\alpha}$ (or $C^{\8}$) partially hyperbolic diffeomorphism with the dimension of the center bundle equal to two, and there are closed $u$- and $s$-forms non-degenerated on the unstable, respectively the stable foliation, then the topological entropy is lower semicontinuous (or continuous) at $f$ in the $C^1$ (or $C^{\8}$) topology.
\end{cor}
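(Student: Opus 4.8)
The plan is to derive both semicontinuity statements from three facts established above: the volume growth $\overline v_u$ is simultaneously a lower bound for the topological entropy and locally constant in the $C^1$ topology, any measure carrying entropy strictly above this volume growth is forced to be hyperbolic, and, in the $C^\infty$ case, the topological entropy is already known to be upper semicontinuous. Write $m(f)=\max\{\overline v_u(f),\overline v_u(f^{-1})\}$. Applying the preceding theorem on volume growth to $f$ (via the closed $u$-form) and to $f^{-1}$ (whose unstable foliation is the stable foliation $W^s$ of $f$, via the closed $s$-form) gives $\overline v_u(f)=\log\hbox{spec}(f^*_u)$ and $\overline v_u(f^{-1})=\log\hbox{spec}((f^{-1})^*_s)$, both unchanged under small $C^1$ perturbations. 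Since partial hyperbolicity and the non-degeneracy of the two forms on the respective foliations are $C^1$-open conditions, there is a $C^1$-neighborhood $\mathcal U$ of $f$ on which $m(g)=m(f)$; and because every type of volume growth is a lower bound for the topological entropy, $h(g)\ge m(g)=m(f)$ for all $g\in\mathcal U$.

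Next I would establish lower semicontinuity in the $C^1$ topology. Fix $\ep>0$. If $m(f)>h(f)-\ep$, the previous paragraph already yields $h(g)\ge m(f)>h(f)-\ep$ on $\mathcal U$. Otherwise $h(f)>m(f)$, and the variational principle provides an ergodic measure $\mu$ with $h_\mu(f)>\max\{m(f),\,h(f)-\ep\}$; in particular $h_\mu(f)>\max\{\overline v_u(f),\overline v_u(f^{-1})\}$, so by the corollary on hyperbolicity of high-entropy measures, $\mu$ is hyperbolic. As $f$ is $C^{1+\alpha}$ and $\mu$ is ergodic and hyperbolic, Katok's horseshoe theorem yields a uniformly hyperbolic basic set $\Lambda$ with $h_{\mathrm{top}}(f|_\Lambda)>h(f)-\ep$. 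Hyperbolic basic sets persist under $C^1$ perturbations, so after shrinking $\mathcal U$ each $g\in\mathcal U$ carries a continuation $\Lambda_g$ on which $g$ is topologically conjugate to $f|_\Lambda$, whence $h(g)\ge h_{\mathrm{top}}(g|_{\Lambda_g})=h_{\mathrm{top}}(f|_\Lambda)>h(f)-\ep$. Since $\ep$ was arbitrary, $h$ is lower semicontinuous at $f$ in the $C^1$ topology.

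For the $C^\infty$ statement I would combine this with upper semicontinuity. Lower semicontinuity in the $C^1$ topology immediately implies lower semicontinuity in the finer $C^\infty$ topology. For $C^\infty$ diffeomorphisms the topological entropy is upper semicontinuous in the $C^\infty$ topology (Newhouse, via Yomdin's theory of $C^\infty$ volume growth), so at a $C^\infty$ point $f$ the two one-sided bounds combine to give continuity of $h$ in the $C^\infty$ topology.

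The main obstacle is the hyperbolic branch of the lower-bound argument, where the $C^{1+\alpha}$ hypothesis is genuinely used: it is precisely what licenses Pesin theory and Katok's approximation of a nonuniformly hyperbolic ergodic measure by uniformly hyperbolic horseshoes of nearly the same entropy, and it is the reason lower semicontinuity is not claimed in the purely $C^1$ category. The remaining work is bookkeeping: verifying that the two volume growths are simultaneously locally constant on one common neighborhood, that the horseshoe continuation preserves entropy exactly (structural stability of basic sets), and that the measure supplied by the variational principle can be taken ergodic with entropy strictly exceeding $m(f)$.
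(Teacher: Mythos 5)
Your argument is correct and is precisely the strategy the paper intends: it gives no explicit proof but defers to the methods of \cite{HSX}, which are exactly your combination of the locally constant volume growth $\max\{\overline v_u(f),\overline v_u(f^{-1})\}=\max\{\log\hbox{spec}(f^*_u),\log\hbox{spec}((f^{-1})^*_s)\}$ as a lower bound for $h$, the hyperbolicity of ergodic measures with entropy exceeding that bound, Katok's horseshoe approximation in the $C^{1+\alpha}$ setting, and Yomdin--Newhouse upper semicontinuity for the $C^{\infty}$ case. The case split on whether $m(f)>h(f)-\ep$ and the persistence of the horseshoe's entropy under $C^1$ perturbation are handled correctly.
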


{\bf Acknowledgments:} The author was partially supported by Marie Curie grant IEF-234559, and would like to thank CRM Barcelona for the hospitality, and Zhihong Xia, Jiagang Yang and Graham Smith for useful conversations.

\bibliographystyle{plain}

\end{document}